\newtheorem{theorem}{Theorem}[section]
\newtheorem{corollary}[theorem]{Corollary}
\newtheorem{lemma}[theorem]{Lemma}
\newtheorem{proposition}[theorem]{Proposition}
\theoremstyle{definition}
\newtheorem{definition}[theorem]{Definition}
\newtheorem{remark}[theorem]{Remark}
\newtheorem{example}[theorem]{Example}
\newcommand{\ep}{\varepsilon}
\newcommand{\RR}{\mathbb{R}}
\newcommand{\NN}{\mathbb{N}}
\newcommand{\ZZ}{\mathbb{Z}}
\newcommand{\TT}{\mathbb{T}}
\newcommand{\cC}{\mathcal{C}}
\newcommand{\cE}{\mathcal{E}}
\newcommand{\cH}{\mathcal{H}}
\newcommand{\cK}{\mathcal{K}}
\newcommand{\cN}{\mathcal{N}}
\newcommand{\cm}{\mathcal{M}}
\newcommand{\cM}{\mathcal{M}}
\newcommand{\dhh}{D_H}
\newcommand{\sL}{\mathscr{L}}
\newcommand{\pww}{P^\bot_{f,\phi}(Z)}
\newcommand{\al}{\alpha}
\newcommand{\nfw}{\cN(f,Z)}
\title[Non-dense Orbits and Approximate Product Property]
      {Non-dense orbits of systems with approximate product property}
\author[Peng Sun]{}
\subjclass[2010]{Primary:  37C50, 37D35.
        Secondary:  28D20, 37A35, 37B40, 37C40, 37C45, 37D25, 37D30.}
 \keywords{non-dense orbit, approximate product property, specification
pressure, entropy, weak face.}
 \email{sunpeng@cufe.edu.cn}
\begin{document}

 \maketitle\ 


\centerline{\scshape Peng Sun}
\medskip
{\footnotesize
 \centerline{China Economics and Management Academy}
   \centerline{Central University of Finance and Economics}
   \centerline{Beijing 100081, China}
} 

\bigskip

\begin{abstract}
We show that for any topological dynamical system with approximate product property,
the  set of points whose forward orbits 
do not accumulate to any point in
a large
set carries full topological pressure.
\end{abstract}


\section{Introduction}

Let $(X,d)$ be a compact metric space and $f:X\to X$ be a continuous map.
For $x\in X$, denote the forward orbit of $x$ by
$$O_f(x):=\{f^n(x):n\in\NN\}.$$
For a subset $Z\subset X$, denote the set of points whose forward orbits 
do not accumulate to any point in
$Z$ by
$$\cN(f,Z):=\{x\in X: \overline{O_f(x)}\cap Z=\emptyset\}.$$
The points in $\cN(f,Z)$ have \emph{non-dense (forward) orbits}.
Study of such sets of non-dense orbits has motivation in homogeneous dynamics,
where it is connected to Diophantine approximation.
The Hausdorff dimensions of such sets are intensively investigated,
which sometimes led to interesting results in number theory and 
other fields.
For example, see \cite{Dani85, Dani86, Dani88,
KM96, K98, BFK, KW13, AGK15, GW, AGG, AGK20}.
Similar results are also established for more general hyperbolic or partially
hyperbolic systems \cite{Urban, Chung, Dol, HY, 
Tseng, Wu1, Wu2, Wu3}.
Non-dense orbits are also closely related to  irregular
behaviors. \cite{DT1, DT2} contain an elaborated classification of
the sets exhibiting various statistical
behaviors as well as
a multifractal analysis on them for hyperbolic systems.

In this article we illustrate a new approach, 
which studies the topological
entropy and topological
pressure carried by $\nfw$ from approximate product property, a very weak
variation of Bowen's specification property \cite{Bowen}.
We show that there is a mechanism that produces plenty of disjoint compact
$f$-invariant sets which consist of various non-dense orbits.
Approximate product property was introduced
by Pfister and Sullivan \cite{PfSu}, which is almost the weakest specification-like
property \cite{KLO, Sunintent, Sununierg}.
While Bowen's original specification property requires strong hyperbolicity,
approximate product property 
is compatible with certain non-hyperbolic behaviors.
We perceive that systems
with approximate product properties (APP systems for short) have delicate structures
in many senses
and the author has obtained some interesting results
\cite{Sunintent, Sununierg}.



Let $\phi:X\to\RR$ be a continuous potential function.
For any subset $Y\subset X$, denote 
by $P(Y,f,\phi)$ and $h(Y,f)=P(Y,f,0)$
the topological pressure and the topological entropy on $Y$.
Denote by $P(f,\phi):=P(X,f,\phi)$ and $h(f):=h(X,f)$ the topological pressure
and topological entropy of the system. We state the main result of the article as follows.

\begin{theorem}\label{thmain}
Let $(X,f)$ be 
an APP system with
positive topological entropy 
and $\phi:X\to\RR$ be a continuous potential function.
Suppose that $Z=\bigcup_{i=1}^n Z_i$ is a finite union of subsets of $X$
such that for each $i=1,\cdots, n$, 
one of the following holds:
\begin{enumerate}
\item $Z_i$ is any single forward orbit.
\item For any given $\mu_i\in\cm(X,f)$,
$Z_i$ consists of all points whose empirical measures accumulate to $\mu_i$.
In particular, $Z_i$ may contain all generic points for $\mu_i$.
\item $Z_i$ is any compact $f$-invariant subset of $X$
such that $\cm(Z_i,f)\ne\cm(X,f)$.
\item $Z_i$ consists of the points with weak $K_i$-behavior, where
$K_i$ is  
any compact subset 
of a proper weak face (see Definition \ref{weakface} and \ref{kbeh}). 
\end{enumerate}
Then
$P(\nfw,f,\phi)=P(f,\phi)$. In particular, $h(\nfw,f)=h(f)$.
\end{theorem}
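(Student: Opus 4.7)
The plan is to leverage the variational principle: for each $\ep > 0$ we select an ergodic measure $\mu$ with $h_\mu(f) + \int \phi\, d\mu > P(f,\phi) - \ep$ that is compatible with every $Z_i$, then we build a Moran-like Cantor set $F \subset \cN(f, Z)$ via the approximate product property by concatenating $\mu$-typical $(n,\ep)$-orbit segments with small gluing gaps, and use a standard mass distribution argument to obtain $P(F, f, \phi) \geq h_\mu(f) + \int \phi\, d\mu - O(\ep)$. Sending $\ep \to 0$ yields $P(\cN(f,Z), f, \phi) \geq P(f,\phi)$; the reverse inequality is automatic from monotonicity of pressure.

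The first task is selecting such a $\mu$. In Case (2), any ergodic $\mu \neq \mu_i$ works, since its generic points have empirical measures converging to $\mu$ and hence not accumulating at $\mu_i$. In Case (3), the hypothesis $\cm(Z_i, f) \neq \cm(X, f)$ combined with ergodic decomposition produces an ergodic measure with $\mu(Z_i) = 0$; entropy density of ergodic measures, which is available under approximate product property, then upgrades this to one of pressure close to $P(f, \phi)$. Case (4) appeals to the weak face apparatus of Definitions \ref{weakface} and \ref{kbeh}: because proper weak faces are exceptional loci inside $\cm(X,f)$, ergodic measures outside such a face remain entropy-dense, so $\mu$ can be chosen outside the face containing $K_i$ with near-maximal pressure. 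Case (1) is handled by noting that any non-atomic ergodic $\mu$ of positive entropy assigns mass zero to the countable orbit $O_f(y)$. Since there are only finitely many $Z_i$, a diagonal choice gives a single $\mu$ satisfying all constraints simultaneously.

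The Moran-like construction must then guarantee that every $x \in F$ satisfies $\overline{O_f(x)} \cap Z = \emptyset$, not merely $O_f(x) \cap Z = \emptyset$. For Cases (2), (3), and (4) this is arranged by confining every orbit segment to a compact set $Y$ with $\mu(Y)$ close to $1$ and $Y$ disjoint from a fixed neighborhood of the relevant portion of $Z_i$ (available by inner regularity of $\mu$ together with the $\mu$-null property arranged in the previous step); any orbit assembled by APP-gluing from segments inside $Y$ has its closure contained in $Y$ and hence disjoint from $Z_i$. The delicate case is (1) when $O_f(y)$ is dense in $X$: no global compact $Y$ disjoint from $O_f(y)$ is available, so at stage $k$ of the Moran tree one must exclude orbit segments that $\delta_k$-approach $\{f^j(y): 0 \leq j \leq N_k\}$ with $\delta_k \downarrow 0$ and $N_k \uparrow \infty$, and verify that only an asymptotically negligible fraction of segments is removed so that the pressure lower bound survives. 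Unifying these per-case exclusions into one Moran scheme that simultaneously outputs a single $F \subset \cN(f,Z)$ of full pressure is the main technical obstacle, and is where the finer structural properties of APP systems (entropy density on proper weak faces, periodic approximation, and the stability of the APP gluing under iterated exclusion) must be exploited in concert.
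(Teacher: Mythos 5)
Your overall strategy---avoid $Z$ \emph{spatially} by confining orbit segments to a compact set $Y$ of large $\mu$-measure disjoint from (a neighborhood of) $Z_i$, then glue with APP and run a Moran/mass-distribution argument---breaks down at exactly the point that makes the theorem nontrivial. In Cases (2) and (4) the set $Z_i$ is defined by a statistical condition ($\Omega(x)\ni\mu_i$, resp.\ $\Omega(x)\cap K_i\ne\emptyset$) and is typically \emph{dense} in $X$ (for APP-type systems the points whose empirical measures accumulate to a given invariant measure form a dense, often residual or even full-measure set), so no compact $Y$ of positive measure can be disjoint from a neighborhood of $Z_i$; spatial avoidance is simply not available, and inner regularity of $\mu$ does not help. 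Even in Case (3), where $Z_i$ is closed, the claim that ``any orbit assembled by APP-gluing from segments inside $Y$ has its closure contained in $Y$'' is false: the approximate product property only matches the prescribed segments to within $\ep$ outside a $\delta_2$-proportion of times and allows gaps between blocks, so the glued orbit can visit arbitrary parts of $X$ along a sparse set of times and its closure can meet $Z$. Moreover a Moran--Cantor set $F$ is not $f$-invariant, and membership in $\cN(f,Z)$ is a condition on \emph{every} point of $\overline{O_f(x)}$, not on $x$ alone; controlling the statistics of the constructed points does not control the statistics of their orbit-closure points. Your final paragraph concedes that unifying the exclusions is an unresolved ``main technical obstacle,'' so the argument is incomplete even on its own terms. (Also, your special $\delta_k$-exclusion scheme for Case (1) is unnecessary: all points of a forward orbit share the same $\Omega$, so Case (1) is contained in Case (2) and follows by antitonicity of $Z\mapsto\cN(f,Z)$.)

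The paper's proof avoids all of this by working statistically rather than spatially. It chooses, via the weak-face convexity argument (Lemma \ref{lemproper} and Proposition \ref{propunion}), a measure $\mu'$ of pressure close to $P(f,\phi)$ such that $\dhh(\{\mu'\},\Omega(x))>\eta$ for all $x\in Z$, i.e.\ $\{\mu'\}\notin\overline{\Omega(Z)}$; then it invokes Proposition \ref{entropydense}/Corollary \ref{corpres} (the entropy-density theorem for APP systems from \cite{Sunintent}) to produce a \emph{compact $f$-invariant} set $\Lambda$ with $P(\Lambda,f,\phi)>P_{\mu'}(f,\phi)-\ep$ on which the empirical measures of \emph{all} points are uniformly $\eta$-close to $\mu'$ for all large times. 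Then $\Lambda\cap Z=\emptyset$ for purely statistical reasons, and compactness plus invariance give $\overline{O_f(x)}\subset\Lambda$ for $x\in\Lambda$, hence $\Lambda\subset\nfw$. If you want to salvage your plan, the Moran construction would have to output (the closure of) an invariant set with this uniform empirical-measure control---which is essentially reproving Proposition \ref{entropydense}---and the avoidance of $Z$ must be phrased through $\Omega(\cdot)$ and the Hausdorff distance on $\cK(X,f)$, not through a compact set $Y\subset X$ disjoint from $Z$.
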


The case of $Z$ in Theorem \ref{thmain} is just a noteworthy instance
but not all of them.
Our key result is Theorem \ref{propmain}. 
In this article we adopt Pesin-Pitskel's definition of topological pressures
on non-compact
sets. See \cite{PP} or \cite{Pes} for details. 
Results in this article 
remain valid if another definition 
(e.g. by $(n,\ep)$-separated sets \cite{Walters})
of $P(Y,f,\phi)$
 is adopted
as long as it coincides with Pesin-Pitskel's definition whenever 
$Y$ is compact
and $f$-invariant, i.e. $f(Y)\subset Y$.
They also remain valid if $\phi$ is replaced by
an asymptotically additive potential $\Phi$ 
introduced in \cite{FH}, as long as
the integral of 
the potential is continuous. Moreover, by
\cite{Sununierg}, an APP system with 
zero topological
entropy must be uniquely ergodic. In this case every point is generic for
the unique ergodic measure hence $\nfw$ may be empty.
Finally, we notice 
 that \cite{Zhao} contains a similar result for the case
that the system has specification property and $Z$ consists of just a single
non-transitive point.

APP systems form a broad class that includes most familiar systems.
The following provides an incomplete list of them, to which our results
apply:
\begin{enumerate}
\item
Transitive sofic shifts;
\item $\beta$-shifts;


\item Ergodic toral automorphisms;
\item Transitive graph maps;
\item A homogeneneous system $(G/\Gamma,g)$, 
where $G$ is connected semisimple Lie group without compact factors,
$\Gamma$ is an irreducible cocompact lattice of G and $g\in G$ is non-quasiunipotent
\cite{GSW};

\item 
Every $C^0$-generic map $f$ on a compact
Riemannian manifold restricted to every 
chain-recurrent class $\cC$ for $f$ \cite{BeTV};

\item Certain partially hyperbolic diffeomorphisms, 
e.g. transitive time-1 maps of Anosov flows;

\item A product of an APP system and a system
with tempered specification property,
e.g. the product of an irrational rotation and an ergodic toral automorphism;
\item Factors and conjugates of above systems.

\end{enumerate}

Note that for  symbolic systems Theorem \ref{thmain}
and Theorem \ref{propmain} directly
yield the corresponding results on the Hausdorff dimension of $\nfw$,
which generalize \cite[Theorem 1]{Dol}.
Moreover, in the above list there are certain homogeneous systems.
Our results for these systems are in some sense related to the
conjectures of Margulis \cite{Marg}.

Theorem \ref{thmain} 
 is a direct corollary of  Theorem \ref{propmain} and Proposition
\ref{propunion}. Note that among the cases of $Z_i$ in Theorem \ref{thmain},
Case (2) covers Case (1) and
 by Lemma \ref{lemcomp}, 
Case(4) covers Case (3).
We shall introduce our notations in Section 2. Then
we focus on APP systems in Section 3. Finally we explain the other notions
involved in Theorem \ref{thmain} and complete the proof of the theorem in the
last section.

\section{Notations}

Let $(X,f)$ be a topological dynamical system.
Denote 
by $\cM(X)$ the space of probability measures on $X$, by $\cm(X,f)$ 
the subspace of all invariant probability measures for
$(X,f)$ and by $\cm_e(X,f)$
the subset consisting of the ergodic ones. 
As $X$ is compact, both $\cM(X)$ and $\cm(X,f)$ are  compact metrizable spaces 
in the weak-$*$
topology \cite[Theorem 6.5 and Theorem 6.10]{Walters}.

Denote by $D$ a metric on $\cm(X)$ that induces the weak-$*$ topology on $\cm(X)$.
Denote
$$\cK(X,f):=\{K: \text{$K$ is compact subset of $\cm(X,f)$}\}.$$
Then $\cK(X,f)$ is a compact metric space with the Hausdorff metric
$$D_H(K_1,K_2):=\max\{\max_{\mu\in K_1}\min_{\nu\in K_2}D(\mu,\nu),
\max_{\nu\in K_2}\min_{\mu\in K_1}D(\mu,\nu)\}.$$

Denote by 
$\ZZ^+$ the set of all
positive integers. 
For $x\in X$ and $n\in\ZZ^+$, we define the \emph{empirical measure} $\cE(x,n)$ such that
$$\int\phi d\cE(x,n):=\frac1n\sum_{k=0}^{n-1}\phi(f^k(x))\text{ for every
}\phi\in C(X).$$
Denote
$$\Omega(x):=\left\{\mu\in\cm(X):\mu\text{ is a weak-$*$ 
accumulation point of $\left\{\cE\left(x,n\right)\right\}_{n=1}^\infty$
}\right\}.$$
Then 
every $\mu\in\Omega(x)$ is an invariant measure and $\Omega(x)$ is closed.
Hence $\Omega(x)\in\cK(X,f)$ for every $x\in X$.


Denote by $h_\mu(f)$ the metric entropy of $(X,f)$ with respect to
$\mu\in\cm(X,f)$ and by $P_\mu(f,\phi):=h_\mu(f)+\int\phi d\mu$ the
pressure of $\mu$. When $Y$ is a compact $f$-invariant set,
the topological entropy and topological pressure can be calculated with
$(n,\ep)$-separated subsets of $Y$ and we denote by
$h(Y,f,\ep)$ and $P(Y,f,\phi,\ep)$ their values at the scale $\ep$.
It holds that
\begin{equation*}
P(Y, f,\phi)=\sup\{P(Y,f,\phi,\ep):\ep>0\}
=\sup\left\{P_\mu(f,\phi): \mu\in\cm(Y,f)\right\}.
\end{equation*}

Readers are referred to the books \cite{Pes} and \cite{Walters} for more
details on measures, entropy and pressure.

\section{Approximate Product Property}

\begin{definition}\label{defapp}
The system $(X,f)$ is said to have \emph{approximate product property},
or called an \emph{APP system},
if for every $\ep, \delta_1, \delta_2>0$, 
        there is $N
        >0$
        such that for every $n\ge N$ and every sequence $\{x_k\}_{k=1}^\infty$ in $X$, there exist an 
        increasing sequence $\{s_k\}_{k=1}^\infty$ of integers and $z\in X$ such that 
        $$s_1=0\text{ and }n\le s_{k+1}-s_k<n(1+\delta_1)\text{ for each }k\in\ZZ^+,\text{ and}$$
        \begin{equation*}
        \left|\left\{0\le j\le n-1: d(f^{s_{k}+j}(z), f^j(x_k))>\ep\right\}\right|<\delta_2
        n\text{ for each $k\in\ZZ^+$}.
        \end{equation*}
\end{definition}

Approximate product property is almost the weakest specification-like property.
It is weaker than almost specification property (also called $g$-almost product property),
tempered specification property (also called almost weak specification property
or weak specification property), gluing orbit property, etc.
More detailed discussions on specification-like properties can be found 
in \cite{DGS}, \cite{KLO},
\cite{Sunintent} and \cite{Sununierg}.


The following is an essential fact 
for APP systems, which is an improved version
of {\cite[Proposition 2.3 and Theorem 2.1]{PfSu}}.

\begin{proposition}[{\cite[Proposition 5.1]{Sunintent}}]
\label{entropydense}
Let $(X,f)$ be an APP system.
Then for any $\mu\in\cm(X,f)$, any $h\in\left(0,h_\mu(f)\right)$ and
any $\eta, \ep, \delta_0>0$, there are 
$\delta\in(0,\delta_0)$ and a compact $f$-invariant subset 
$\Lambda=\Lambda(\mu, h, \eta, \ep, \delta)$ such that
\begin{enumerate}
\item
There is $N\in\ZZ^+$ such that $D(\cE(x,n),\mu)<\eta$
for every $x\in\Lambda$ and every $n>N$.

\item $h<h(\Lambda,f,\delta)<h+\ep$. In particular, 
$h(\Lambda,f)>h.$
\end{enumerate}
\end{proposition}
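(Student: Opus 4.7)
My plan is to realize $\Lambda$ as the $f$-invariant hull of the image of a symbolic system under an APP-shadowing map, following the Pfister--Sullivan entropy-density strategy. First I would reduce to the case that $\mu$ is ergodic by affinity of metric entropy: choose ergodic components $\mu_1,\ldots,\mu_L$ and weights $\alpha_i$ so that $\tilde\mu:=\sum_i\alpha_i\mu_i$ satisfies $D(\tilde\mu,\mu)<\eta/3$ and $\sum_i\alpha_i h_{\mu_i}(f)>h$; the construction below is then performed in parallel for each $\mu_i$ and the resulting blocks woven together in proportions $\alpha_i$. For clarity I describe only the ergodic case.

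Next I would use Katok's entropy formula: fix $\gamma>0$ with $h+3\gamma<h_\mu(f)$ and an auxiliary scale $\ep_1>0$. For all sufficiently large $n$, the Birkhoff ergodic theorem combined with Katok's theorem produces an $(n,3\ep_1)$-separated set $E_n\subset X$ of cardinality at least $e^{n(h+2\gamma)}$ each of whose elements $x$ already satisfies $D(\cE(x,n),\mu)<\eta/4$. These are the ``building-block'' orbits that APP will stitch together.

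The heart is the APP-gluing step. Fix $\delta\in(0,\delta_0)$ with $\delta<\ep_1$ (this will be the separation scale in (2)) and pick auxiliary $\ep_2\ll\ep_1-\delta$ and $\delta_1,\delta_2>0$ very small; Definition \ref{defapp} then yields $N$ such that every sequence $(y_k)_{k\ge1}$ in $X$ admits gap-times $s_1=0<s_2<\cdots$ with $n\le s_{k+1}-s_k<n(1+\delta_1)$ and a shadowing point $z\in X$ satisfying $d(f^{s_k+j}(z),f^j(y_k))\le\ep_2$ for all but $\delta_2 n$ indices $j\in[0,n-1]$. Let $\Lambda_0$ be the set of all such $z$'s as $(y_k)$ ranges over sequences in $E_n$. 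Two features matter: \emph{counting} --- sequences differing in some block $k$ produce shadowing points that are $(s_{k+1},\delta)$-separated (because $E_n$ is $(n,3\ep_1)$-separated and the two bad sets absorb only $2\delta_2 n$ indices), so $\Lambda_0$ contains at least $|E_n|^m$ points that are $(s_m,\delta)$-separated, giving
\begin{equation*}
h(\overline{\Lambda_0},f,\delta)\ \ge\ \frac{n(h+2\gamma)}{n(1+\delta_1)}\ >\ h;
\end{equation*}
\emph{empirics} --- each block contributes an empirical close to $\mu$, the gap and the $\delta_2 n$ bad indices only contribute $O(\delta_1+\delta_2)$, and the $\ep_2$-shadowing only shifts empirical integrals by the modulus of continuity of test functions, so $D(\cE(z,s_m),\mu)<\eta/2$ once $m$ is large.

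Finally I would pass from $\Lambda_0$ to a compact $f$-invariant set by taking $\Lambda:=\bigcap_{k\ge 0}\overline{\bigcup_{j\ge k}f^j(\Lambda_0)}$, or equivalently by viewing the shadowing construction as a continuous map from a subshift of $E_n^\NN$ and pushing forward its shift-invariant subsystem. The upper bound $h(\Lambda,f,\delta)<h+\ep$ is then arranged by choosing $\gamma,\delta_1$ small enough and starting from a slightly smaller entropy target, while the lower bound survives the closure because $(s_m,\delta)$-separation is an open condition. The main obstacle is securing all three requirements simultaneously on a \emph{single} $\Lambda$: the entropy lower bound, a \emph{uniform} empirical-measure estimate over every point of $\Lambda$ (not only the shadowing points originally constructed), and $f$-invariance. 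The empirical estimate propagates to the closure and to forward iterates because $\cE(\cdot,n)$ is continuous in the base point and the gap-lengths $s_{k+1}-s_k$ are uniformly bounded --- this upper bound on gaps, provided by $\delta_1$ in Definition \ref{defapp}, is exactly what makes APP sufficient where pure specification would be overkill.
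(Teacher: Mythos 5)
Note first that this proposition is not proved in the present paper at all: it is quoted verbatim from \cite[Proposition 5.1]{Sunintent}, and your outline follows the same Pfister--Sullivan-style route used there (Katok-type separated blocks for ergodic components of $\mu$, APP gluing of block sequences, then an invariant closure). Within that route, however, two steps of your argument have genuine gaps. The first is the separation claim in your ``counting'' step: if $x\ne x'$ in $E_n$ are merely $(n,3\ep_1)$-separated, the separation is guaranteed at only \emph{one} time $j\in[0,n-1]$, and that time can perfectly well fall in the union of the two bad sets allowed by Definition \ref{defapp}, which has up to $2\delta_2 n\gg 1$ elements; so shadowing points of different sequences need not be $(s_{k+1},\delta)$-separated, and your estimate $h(\overline{\Lambda_0},f,\delta)\ge n(h+2\gamma)/\bigl(n(1+\delta_1)\bigr)$ does not follow. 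The standard repair is combinatorial: bound the number of elements of $E_n$ whose shadowing points can be close along the block by $\binom{n}{2\delta_2 n}\,C(\ep_1)^{2\delta_2 n}$ (where $C(\ep_1)$ is a covering number of $X$), pass to the corresponding large subset of $E_n$, and absorb the subexponential loss into $\gamma$ by taking $\delta_2$ small; without this (or without blocks separated on a positive-density set of times) the lower bound in (2) is unproved.

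The second gap concerns the upper bound $h(\Lambda,f,\delta)<h+\ep$, which is exactly what makes this an ``improved version'' of \cite[Proposition 2.3]{PfSu} and the reason the scale $\delta$ appears in the statement. You never bound from above the number of $(m,\delta)$-separated points of $\Lambda$: a single template sequence has many shadowing points, and these can be $\delta$-separated at the bad times, where APP imposes no control at all; one must therefore count the possible bad sets and the orbit behaviour on them, incurring an exponential factor of order $H(2\delta_2)+2\delta_2\log C(\delta/2)$ per block, and one needs the shadowing scale to satisfy $2\ep_2<\delta$ so that no $\delta$-separation arises at good times --- your choice $\ep_2\ll\ep_1-\delta$ does not guarantee this. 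One must also account for the freedom in the gap lengths governed by $\delta_1$, for the phases $f^j$ in your closure $\bigcap_{k\ge0}\overline{\bigcup_{j\ge k}f^j(\Lambda_0)}$, and for limit points; and the quantifier order matters, since $C(\delta/2)$ depends on $\delta$, so $\delta_2$ and $\ep_2$ must be chosen after $\delta$. Saying the upper bound ``is arranged by choosing $\gamma,\delta_1$ small enough and starting from a slightly smaller entropy target'' addresses none of these sources of extra entropy; they are precisely the points the proof in \cite{Sunintent} is organized around.
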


\begin{corollary}\label{corpres}
Let $(X,f)$ be an APP system and $\phi$ be a continuous
potential. 
Then for any $\mu\in\cm(X,f)$, any $\al\in\left(\int\phi d\mu,P_\mu(f,\phi)\right)$ and
any $\eta, \ep, \delta_0>0$, there are 
$\delta\in(0,\delta_0)$ and a compact $f$-invariant subset 
$\Lambda=\Lambda_\phi(\mu,\al, \eta, \ep, \delta)$ such that
\begin{enumerate}
\item
There is $N\in\ZZ^+$ such that $D(\cE(x,n),\mu)<\eta$
for every $x\in\Lambda$ and every $n>N$.
\item $\al<P(\Lambda,f,\phi,\delta)<\al+\ep$. In particular, 
$P(\Lambda,f,\phi)>\al.$
\end{enumerate}
\end{corollary}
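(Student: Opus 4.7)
The plan is to deduce this corollary directly from Proposition \ref{entropydense}: on the set $\Lambda$ produced there, the empirical measures are forced to lie weak-$*$ close to $\mu$, which via continuity of $\phi$ translates to uniform control of the Birkhoff averages $S_n\phi/n$ near $\int\phi\,d\mu$, so that on $\Lambda$ the pressure at scale $\delta$ equals the entropy at scale $\delta$ plus approximately $\int\phi\,d\mu$.

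First I would use continuity of the functional $\nu\mapsto\int\phi\,d\nu$ on $\cm(X)$ to pick $\eta'\in(0,\eta)$ so small that $D(\nu,\mu)<\eta'$ forces $|\int\phi\,d\nu-\int\phi\,d\mu|<\ep/4$. After replacing $\ep$ by its minimum with $(P_\mu(f,\phi)-\al)/2$ if necessary, which only strengthens the conclusion, the number $h:=\al-\int\phi\,d\mu+\ep/4$ lies in $(0,h_\mu(f))$. I then feed the data $(\mu,h,\eta',\ep/4,\delta_0)$ into Proposition \ref{entropydense} to obtain $\delta\in(0,\delta_0)$ and a compact $f$-invariant $\Lambda\subset X$ with (a) $D(\cE(x,n),\mu)<\eta'$ for every $x\in\Lambda$ and every sufficiently large $n$, and (b) $h<h(\Lambda,f,\delta)<h+\ep/4$.

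Condition (1) of the corollary is then immediate from $\eta'<\eta$. For condition (2), (a) together with the choice of $\eta'$ gives, for every such $x$ and $n$,
$$\left|\frac{1}{n}\sum_{k=0}^{n-1}\phi(f^k(x))-\int\phi\,d\mu\right|=\left|\int\phi\,d\cE(x,n)-\int\phi\,d\mu\right|<\frac{\ep}{4}.$$
Inserting this uniform bound into the $(n,\delta)$-separated-set formula for $P(\Lambda,f,\phi,\delta)$, which is valid because $\Lambda$ is compact and $f$-invariant, shows that $P(\Lambda,f,\phi,\delta)$ and $h(\Lambda,f,\delta)+\int\phi\,d\mu$ differ by at most $\ep/4$. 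Combining this with (b) yields $\al<P(\Lambda,f,\phi,\delta)<\al+3\ep/4<\al+\ep$, and the ``in particular'' clause follows from $P(\Lambda,f,\phi)\ge P(\Lambda,f,\phi,\delta)$.

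The main obstacle is really just careful bookkeeping of the four small parameters $\eta'$, $h$, the entropy tolerance, and $\ep$ so that both strict inequalities in (2) emerge simultaneously. No additional dynamical idea beyond Proposition \ref{entropydense} is needed, because the uniform Birkhoff estimate converts entropy control directly into pressure control on $\Lambda$.
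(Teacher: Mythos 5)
Your proposal is correct and follows essentially the same route as the paper: both deduce the corollary from Proposition \ref{entropydense} by choosing $\eta'$ via continuity of $\nu\mapsto\int\phi\,d\nu$ and then converting the entropy estimate at scale $\delta$ into the pressure estimate using the uniform closeness of empirical measures to $\mu$ (the paper phrases this through $\inf$/$\sup$ of $\int\phi\,d\nu$ over $\cm(\Lambda,f)$ with tolerance $\ep/3$, you through the Birkhoff averages in the separated-set formula with tolerance $\ep/4$, which is only a cosmetic difference). Your handling of the requirement $h\in(0,h_\mu(f))$ by shrinking $\ep$ matches the paper's ``assume $\al+\ep\le P_\mu(f,\phi)$'' reduction.
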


\begin{proof}
By continuity of $\phi$, there is $\eta'\in(0,\eta)$ such that
\begin{equation}\label{eqintest}
|\int\phi d\nu-\int\phi d\mu|<\frac\ep3\text{ whenever }D(\nu,\mu)<\eta'.
\end{equation}
We may assume that $\al+\ep\le P_\mu(f,\phi)$. Then
$$\al-\int\phi d\mu+\frac\ep3<h_\mu(f).$$
Let $\delta\in(0,\delta_0)$ and 
$$\Lambda:=\Lambda\left(\mu, \al-\int\phi d\mu+\frac\ep3,
\eta', \frac\ep3, \delta\right)$$ 
be as obtained 
from Proposition \ref{entropydense}.
Then Condition (1) in Corollary \ref{corpres} is satisfied
as $\eta'<\eta$. Moreover, 
by \eqref{eqintest},
we have
\begin{align*}
P(\Lambda,f,\phi,\delta)
&\ge
h(\Lambda, f,\delta)
+\inf\left\{\int\phi d\nu: \nu\in\cm(\Lambda,f)\right\}
\\&>\left(\al-\int\phi d\mu+\frac\ep3\right)
+\left(\int\phi d\mu-\frac\ep3\right)
\\&
=\al
\end{align*}
and
\begin{align*}
P(\Lambda,f,\phi,\delta)
&\le
h(\Lambda, f,\delta)
+\sup\left\{\int\phi d\nu: \nu\in\cm(\Lambda,f)\right\}
\\&<\left(\al-\int\phi d\mu+\frac\ep3\right)+\frac\ep3
+\left(\int\phi d\mu+\frac\ep3\right)
\\&
=\al+\ep.
\end{align*}

\end{proof}

\begin{remark}\label{muclose}
Note that the first conditions in Proposition \ref{entropydense}
and Corollary \ref{corpres} 
imply that for every $x\in\Lambda$, we have
$$D(\mu,\nu)\le\eta\text{ for every $\nu\in\Omega(x)$},$$
and 
hence
$\dhh(\{\mu\},\Omega(x))\le\eta$.
\end{remark}



Let $Z$ be a subset of $X$. Denote
$$\Omega(Z):=\{\Omega(x): x\in Z\}\subset\cK(X,f).$$
and
$$\pww:=\sup\left\{P_\mu(f): \{\mu\}\notin
\overline{\Omega(Z)},\mu\in\cm(X,f)\right\},$$
where the closure of $\Omega(Z)$ is taken with respect to 
the $\dhh$ metric on $\cK(X,f)$.
In particular, we put $\pww:=0$ if $\overline{\Omega(Z)}=\cm(X,f)$.

\begin{theorem}\label{propmain}
Let $(X,f)$ be an APP system with positive topological entropy and
$\phi$ be a continuous potential. 
Then for any subset $Z$ of $X$, we have
$$P(\cN(f,Z),f,\phi)\ge\pww.$$
\end{theorem}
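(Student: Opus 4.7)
The plan is, for each invariant measure $\mu$ with $\{\mu\} \notin \overline{\Omega(Z)}$ and $h_\mu(f) > 0$ and each $\al \in \left(\int \phi\, d\mu,\, P_\mu(f,\phi)\right)$, to exhibit a compact $f$-invariant subset $\Lambda$ of $\cN(f,Z)$ satisfying $P(\Lambda, f, \phi) > \al$. Monotonicity of the Pesin--Pitskel pressure then gives $P(\cN(f,Z), f, \phi) > \al$, and letting $\al \uparrow P_\mu(f, \phi)$ followed by a supremum over admissible $\mu$ yields $P(\cN(f,Z), f, \phi) \ge \pww$.

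To build $\Lambda$, I first exploit the hypothesis $\{\mu\} \notin \overline{\Omega(Z)}$. Since $\{\mu\}$ is a singleton and each $\Omega(x)$ is compact, $\dhh(\{\mu\}, \Omega(x)) = \max_{\nu \in \Omega(x)} D(\mu, \nu)$, so there is some $\eta > 0$ such that for every $x \in Z$ some $\nu_x \in \Omega(x)$ satisfies $D(\mu, \nu_x) > 2\eta$. I then apply Corollary \ref{corpres} to the data $(\mu, \al, \eta)$ to obtain a compact $f$-invariant $\Lambda$ with $P(\Lambda, f, \phi) > \al$ and with $D(\cE(y, n), \mu) < \eta$ for every $y \in \Lambda$ and every sufficiently large $n$; by Remark \ref{muclose} this forces $\Omega(y) \subset \{\nu \in \cm(X) : D(\mu, \nu) \le \eta\}$ for every $y \in \Lambda$.

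To verify $\Lambda \subset \cN(f, Z)$, I fix $y \in \Lambda$ and suppose for contradiction that some $z \in \overline{O_f(y)}$ lies in $Z$. Since $\Lambda$ is compact and $f$-invariant one has $\overline{O_f(y)} \subset \Lambda$, hence $z \in \Lambda$ and therefore $\Omega(z) \subset \{\nu : D(\mu, \nu) \le \eta\}$; but $z \in Z$ supplies some $\nu \in \Omega(z)$ with $D(\mu, \nu) > 2\eta$, a contradiction. Thus $\overline{O_f(y)} \cap Z = \emptyset$ for every $y \in \Lambda$, so $\Lambda \subset \cN(f, Z)$ and the desired pressure bound follows.

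The main obstacle I anticipate is precisely this $f$-invariance step: it is the bridge converting a purely measure-theoretic separation of $\mu$ from $\Omega(Z)$ into the topological non-accumulation $\overline{O_f(y)} \cap Z = \emptyset$, and it relies crucially on Corollary \ref{corpres} producing a compact invariant $\Lambda$ whose \emph{every} member, not just a generic one, has empirical statistics close to $\mu$. A minor residual issue is the boundary case in which $\pww$ is approached only through $\mu$ with $h_\mu(f) = 0$; there the convex combination $\mu_t := (1-t)\mu + t\mu_0$ with $\mu_0 \in \cm_e(X,f)$ of positive entropy (available since $h(f) > 0$) still satisfies $\{\mu_t\} \notin \overline{\Omega(Z)}$ for $t$ small, $h_{\mu_t}(f) > 0$, and $P_{\mu_t}(f, \phi) \to P_\mu(f, \phi)$ as $t \to 0$, so the edge case reduces to the main argument.
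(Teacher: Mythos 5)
Your proof is correct and follows essentially the same route as the paper: use the positive $\dhh$-distance from $\{\mu\}$ to $\overline{\Omega(Z)}$, apply Corollary \ref{corpres} together with Remark \ref{muclose} to obtain a compact $f$-invariant $\Lambda$ with large pressure all of whose points have $\Omega(x)$ within $\eta$ of $\mu$, and then conclude $\Lambda\cap Z=\emptyset$ and hence $\Lambda\subset\cN(f,Z)$ by invariance. Your extra perturbation $\mu_t=(1-t)\mu+t\mu_0$ to handle measures with $h_\mu(f)=0$ is a welcome refinement, since the paper invokes Corollary \ref{corpres} without commenting on that degenerate case.
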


\begin{proof}
        If $\overline{\Omega(Z)}=\cm(X,f)$ then $\pww=0$. The result is trivial.
        
Otherwise, for every 
$\al<\pww$, there is $\mu\in\cm(X,f)$ such that
$$\{\mu\}\notin\overline{\Omega(Z)}\text{ and }P_\mu(f)>\al.$$
As $\overline{\Omega(Z)}$ is compact, there is
$\eta>0$ such that
\begin{equation}\label{eqnunotinz}\eta<
\min\left\{\dhh\left(\{\mu\},K\right):K\in\overline{\Omega(Z)}\right\}.
\end{equation}


Take any $\ep>0$.
By 
Corollary \ref{corpres} and Remark \ref{muclose}, 
there is a compact $f$-invariant set $\Lambda$
such that
\begin{equation}\label{eqclose}
\dhh(\{\mu\},\Omega(x))\le\eta
\text{ for every }x\in\Lambda
\end{equation}
and
$$P(\Lambda,f,\phi)>P_{\mu}(f,\phi)-\ep.$$





We claim that $\Lambda\cap Z=\emptyset$.
Suppose that $y\in\Lambda\cap Z$. By \eqref{eqclose},
$y\in\Lambda$ implies that
$$\dhh\left(\{\mu\},\Omega(y)\right)\le\eta.$$
But by \eqref{eqnunotinz}, $y\in Z$ implies that
$$D_H(\{\mu\},\Omega(y))\ge 
\min\left\{\dhh\left(\{\mu\},K\right):K\in\overline{\Omega(Z)}\right\}>\eta.$$
This is a contradiction.

As 
$\Lambda$ is compact and $f$-invariant, 
we have $\overline{O_f(x)}\subset\Lambda$ for every $x\in\Lambda$.
This implies that
$\Lambda\subset\cN(f,Z)$. Then
$$P(\cN(f, Z),f,\phi)\ge P(\Lambda,f,\phi)
>P_{\mu}(f,\phi)-\ep>\al-\ep.$$
As $\al<\pww$ and $\ep>0$ are arbitrarily taken, we have 
$$P(\cN(f, Z),f,\phi)\ge\pww.$$
\end{proof}

Let $\mu\in\cm(X,f)$ and $\eta>0$. Denote
$$B(\mu,\eta):=\{\nu: D(\nu,\mu)<\eta\}.$$
If $\Omega(x)\nsubseteq B(\mu,\eta)$ for every $x\in Z$, then
$\{\mu\}\notin\overline{\Omega(Z)}$ and $\pww\ge P_{\mu}(f,\phi)$.
The following is a direct corollary of Theorem \ref{propmain}.

\begin{corollary}
Let $(X,f)$ be an APP system with positive topological entropy and
$\phi$ be a continuous potential.  Let $\mu\in\cm(X,f)$. Suppose that
 there is
$\eta>0$ such that $\Omega(x)\nsubseteq B(\mu,\eta)$ for every $x\in Z$.
Then
$$P(\cN(f, Z),f,\phi)\ge P_{\mu}(f,\phi).$$
In particular, if $\mu$ is an equilibrium state of $(X,f,\phi)$, then
$$P(\cN(f, Z),f,\phi)=P(f,\phi).$$
\end{corollary}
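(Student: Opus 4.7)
The plan is to deduce this corollary essentially for free from Theorem \ref{propmain}, by showing that the pointwise hypothesis on $\mu$ forces $\mu$ to be an admissible candidate in the supremum defining $\pww$. Concretely, it suffices to establish that under the hypothesis one has $\pww\ge P_\mu(f,\phi)$; the rest is monotonicity and the definition of an equilibrium state.

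First I would translate the condition $\Omega(x)\nsubseteq B(\mu,\eta)$ into a Hausdorff-metric separation. If $\Omega(x)\nsubseteq B(\mu,\eta)$, there exists $\nu\in\Omega(x)$ with $D(\nu,\mu)\ge\eta$. Unwinding the definition of $\dhh$ on $\cK(X,f)$, the term
$$\max_{\beta\in\Omega(x)}\min_{\alpha\in\{\mu\}}D(\alpha,\beta)=\max_{\beta\in\Omega(x)}D(\mu,\beta)$$
is then at least $\eta$, so $\dhh(\{\mu\},\Omega(x))\ge\eta$ for every $x\in Z$. Because the lower bound is uniform over $Z$ and $K\mapsto\dhh(\{\mu\},K)$ is continuous on $\cK(X,f)$, the inequality persists on the closure, giving $\dhh(\{\mu\},K)\ge\eta>0$ for all $K\in\overline{\Omega(Z)}$. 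In particular $\{\mu\}\notin\overline{\Omega(Z)}$.

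With this separation in hand, $\mu$ qualifies in the supremum defining $\pww$, so $\pww\ge P_\mu(f,\phi)$. Applying Theorem \ref{propmain} yields
$$P(\cN(f,Z),f,\phi)\ge\pww\ge P_\mu(f,\phi),$$
which is the first conclusion. For the ``in particular'' clause, if $\mu$ is an equilibrium state then by definition $P_\mu(f,\phi)=P(f,\phi)$, and the reverse inequality $P(\cN(f,Z),f,\phi)\le P(f,\phi)$ follows from monotonicity of Pesin--Pitskel pressure with respect to the inclusion $\cN(f,Z)\subset X$.

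There is no serious obstacle here: all of the substance is packaged into Theorem \ref{propmain}, and the corollary is just a convenient reformulation of its hypothesis using a single measure $\mu$ rather than the more abstract Hausdorff condition on $\overline{\Omega(Z)}\subset\cK(X,f)$. The only minor subtlety worth being careful about is verifying that the pointwise separation on $Z$ upgrades to separation on the closure, which is immediate from continuity of $\dhh(\{\mu\},\cdot)$.
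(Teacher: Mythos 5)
Your proposal is correct and matches the paper's route: the paper likewise observes that $\Omega(x)\nsubseteq B(\mu,\eta)$ for all $x\in Z$ gives $\dhh(\{\mu\},\Omega(x))\ge\eta$, hence $\{\mu\}\notin\overline{\Omega(Z)}$ and $\pww\ge P_\mu(f,\phi)$, and then invokes Theorem \ref{propmain} together with monotonicity of the pressure and the definition of an equilibrium state. Your explicit check that the separation passes to the closure of $\Omega(Z)$ is the only detail the paper leaves implicit, and it is handled correctly.
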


We remark that it is possible that $\pww=0$ when $Z$ is countable.
For example, suppose that the system $(X,f)$ has periodic tempered gluing orbit
property (e.g. a quasi-hyperbolic toral automorphism). 
Then the ergodic measures supported on periodic orbits are dense in $\cm(X,f)$, 
hence $\overline{\Omega(Z)}=\cm(X,f)$ if
$Z$ is the countable set consisting of all periodic points.
This case is beyond the limitation of our approach.

\section{Weak Faces} 

\begin{definition}[{cf. \cite{CTV}}]\label{weakface}
A convex subset $L$ of $\cm(X,f)$ is called a \emph{weak face}
if for any $\mu\in L$, $\mu=\lambda\nu_1+(1-\lambda)\nu_2$ for
$\lambda\in(0,1)$ and $\nu_1,\nu_2\in\cm(X,f)$ implies that
$\nu_1,\nu_2\in L$. We say that $L$ is \emph{proper} if $L\ne\cm(X,f)$.
\end{definition}

\begin{remark}
Existence of nonempty proper weak face requires that $(X,f)$
is not uniquely ergodic.
\end{remark}

\begin{lemma}
Let $\sL=\{L_\theta\}_{\theta\in I}$ be any family of weak faces.
Then both
$\bigcup_{\theta\in I} L_\theta$ and $\bigcap_{\theta\in I} L_\theta$
are weak faces.
\end{lemma}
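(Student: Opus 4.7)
The plan is to verify the two clauses of Definition \ref{weakface}, namely convexity of the set and the face property, for each of the two constructions.

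For the intersection $\bigcap_{\theta\in I} L_\theta$, convexity is automatic since an arbitrary intersection of convex sets is convex. The face property is then a matter of bookkeeping: given $\mu$ in the intersection and any decomposition $\mu = \lambda\nu_1 + (1-\lambda)\nu_2$ with $\lambda \in (0,1)$ and $\nu_1,\nu_2 \in \cm(X,f)$, I fix an arbitrary $\theta \in I$, apply the face property of $L_\theta$ (permissible since $\mu \in L_\theta$) to conclude $\nu_1,\nu_2 \in L_\theta$, and then let $\theta$ range over $I$ to place both summands in the intersection. This half is entirely routine.

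For the union $\bigcup_{\theta\in I} L_\theta$, the face property proceeds in the same spirit: any $\mu$ in the union lies in some particular $L_{\theta_0}$, and any convex decomposition of $\mu$ has both summands in $L_{\theta_0}$ by the face property of that single member, hence in the union. The delicate point, which I expect to be the main obstacle, is convexity of the union, since unions of convex sets need not be convex in general. My plan here is to invoke the face condition itself to force the required compatibility: given $\mu_1 \in L_{\theta_1}$ and $\mu_2 \in L_{\theta_2}$ and $\lambda \in (0,1)$, one looks for a single member of $\sL$ containing both endpoints, and hence the segment between them. In the applications in Section 4 the family $\sL$ typically carries enough structure --- being, for instance, the family of all weak faces contained in a fixed weak face --- for such a member to exist. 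Alternatively, reading \emph{weak face} in the sense of \cite{CTV}, where only the face condition is demanded, makes the above face-property verification complete without any further argument.
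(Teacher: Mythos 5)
Your verification of the face condition is exactly the paper's proof: for the union, locate one member $L_{\theta_0}$ containing $\mu$ and push the decomposition through it; for the intersection, run the same argument for every $\theta\in I$. The paper's proof does precisely this and nothing more --- in particular it never addresses convexity of either set, so on everything the paper actually proves your proposal coincides with it.

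The point you flag as delicate is a genuine one, but your repair (a) --- finding a single member of $\sL$ containing both endpoints --- cannot succeed at the stated level of generality, and no repair can: if $e_1\ne e_2$ are ergodic measures, then $\{e_1\}$ and $\{e_2\}$ are weak faces (convex, and the face condition holds by extremality of ergodic measures in $\cm(X,f)$), yet $\frac12 e_1+\frac12 e_2$ does not lie in $\{e_1\}\cup\{e_2\}$, so the union is not convex although it satisfies the face condition. Hence, read literally against Definition \ref{weakface}, which includes convexity, the union half of the lemma fails, and the paper's proof shares the gap you identified rather than closing it. Your alternative reading (b) is the right one: what the lemma really establishes, and all that is used afterwards, is the face condition --- Lemma \ref{lemproper} and Proposition \ref{propunion} invoke only the face property of the individual $L_j$ (plus properness of the union as a set, which has nothing to do with convexity), so no later argument depends on the union being convex. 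In short, your proposal matches the paper's route, and the convexity issue you isolate is a defect of the statement as phrased, not a missing step you were expected to supply.
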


\begin{proof}
Let $\mu\in\bigcup_{\theta\in I} L_\theta$, $\mu=\lambda\nu_1+(1-\lambda)\nu_2$ for
$\lambda\in(0,1)$ and $\nu_1,\nu_2\in\cm(X,f)$. 
Then there is $\theta_0$ such
that $\mu\in L_{\theta_0}$. As $L_{\theta_0}$ is a weak face, we must have
$$\nu_1,\nu_2\in L_{\theta_0}\subset\bigcup_{\theta\in I} L_\theta.$$
So $\bigcup_{\theta\in I} L_\theta$ is a weak face.

Let $\mu\in\bigcap_{\theta\in I} L_\theta$, $\mu=\lambda\nu_1+(1-\lambda)\nu_2$ for
$\lambda\in(0,1)$ and $\nu_1,\nu_2\in\cm(X,f)$. 
For each $\theta\in I$, we have $\mu\in L_{\theta}$ and $L_{\theta}$ is a weak face,
hence
$\nu_1,\nu_2\in L_\theta$. 
This implies that
$\nu_1,\nu_2\in \bigcap_{\theta\in I} L_\theta$.
So $\bigcap_{\theta\in I} L_\theta$ is a weak face.
\end{proof}

\begin{lemma}\label{lemproper}
If $L=\bigcup_{i=1}^\infty L_i$ and each $L_i$ is a proper weak face, then
$L$ is proper.
\end{lemma}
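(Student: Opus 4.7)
The plan is to construct explicitly a single measure $\mu\in\cm(X,f)$ which lies outside every $L_i$. The tool is the defining property of a weak face: if $\mu\in L$ and $\mu=\lambda\nu_1+(1-\lambda)\nu_2$ with $\lambda\in(0,1)$ and $\nu_1,\nu_2\in\cm(X,f)$, then both $\nu_1,\nu_2$ must lie in $L$. So in order to certify that $\mu\notin L_i$, it suffices to exhibit a single decomposition of $\mu$ as a nontrivial convex combination in which one of the components is \emph{known} to lie outside $L_i$.

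Concretely, since each $L_i$ is proper, choose $\mu_i\in\cm(X,f)\setminus L_i$, and define
\[
\mu:=\sum_{i=1}^{\infty}2^{-i}\mu_i.
\]
Because $\cm(X,f)$ is compact and convex in the weak-$*$ topology and the partial sums form a convex combination (the coefficients $2^{-i}$ sum to $1$), this series converges weak-$*$ to a point of $\cm(X,f)$; equivalently, for every $\phi\in C(X)$ the real series $\sum_i 2^{-i}\int\phi\,d\mu_i$ converges absolutely, and the resulting linear functional is a Borel probability measure, automatically $f$-invariant as a limit of invariant measures.

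Now fix $i\in\ZZ^+$ and write
\[
\mu=2^{-i}\mu_i+(1-2^{-i})\nu_i,\qquad \nu_i:=\frac{1}{1-2^{-i}}\sum_{j\ne i}2^{-j}\mu_j.
\]
The coefficients in $\nu_i$ sum to $1$, so $\nu_i\in\cm(X,f)$. If $\mu$ were in $L_i$, then since $2^{-i}\in(0,1)$, the weak face property of $L_i$ would force $\mu_i\in L_i$, contradicting the choice of $\mu_i$. Hence $\mu\notin L_i$ for every $i$, so $\mu\notin L=\bigcup_{i=1}^\infty L_i$, proving $L\ne\cm(X,f)$.

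There is no real obstacle here; the only point requiring minor care is the well-definedness of the countable convex combination $\mu$ in the weak-$*$ topology and the check that $\nu_i\in\cm(X,f)$, both of which follow immediately from compactness and convexity of $\cm(X,f)$. The conceptual content is simply that the weak face axiom propagates membership backward along any nontrivial convex decomposition, and a geometric series of witnesses $\mu_i\notin L_i$ is enough to defeat all $L_i$ simultaneously.
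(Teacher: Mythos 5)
Your proof is correct and follows essentially the same approach as the paper: choose witnesses $\mu_i\notin L_i$, form the geometric convex combination $\sum_i 2^{-i}\mu_i$, and use the weak face axiom on the decomposition $\mu=2^{-i}\mu_i+(1-2^{-i})\nu_i$ to exclude $\mu$ from every $L_i$. The only cosmetic difference is that you define the infinite convex combination directly, while the paper realizes it as a weak-$*$ limit of finite convex combinations (padded by an auxiliary measure $\mu_0$) and passes the decompositions to the limit.
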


\begin{proof}
Fix any $\mu_0\in\cm(X,f)$.
For each $i$, take $\mu_i\in\cm(X,f)\backslash L_i$. Let
$$\mu_n:=
(1-2^{-n})\mu_0+{\sum_{i=1}^n 2^{-i}\mu_i}\in\cm(X,f).$$
Then $\{\mu_n\}_{n=1}^\infty$ converges to some $\mu\in\cm(X,f)$. 
For each $i$ and each $n>i$,
we can write
$$\mu_n=(1-2^{-i})\nu_{i,n}+2^{-i}\mu_i,$$
where
$$\nu_{i,n}:=\frac{1}{1-2^{-i}}\left((1-2^{-n})\mu_0+
\sum_{j\in\{1,\cdots,n\}\backslash\{i\}} 2^{-j}\mu_j\right)\in\cm(X,f).$$
Then $\{\nu_{i,n}\}_{n=1}^\infty$ converges to 
some $\nu_i\in\cm(X,f)$ and 
$$\mu=(1-2^{-i})\nu_{i}+2^{-i}\mu_i.$$
This implies that $\mu\notin L_i$ as $\mu_i\notin L_i$ for each $i$.
So $\mu\notin L=\bigcup_{i=1}^\infty L_i$, hence $L\ne\cm(X,f)$.
\end{proof}

\begin{corollary}
Let $K:=\bigcup_{i=1}^n K_i$ such that each $K_i$ is a compact subset of
a proper weak face. Then $K$ is also a compact subset of a proper weak face.
\end{corollary}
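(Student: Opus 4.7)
The plan is to produce an explicit proper weak face $L$ that contains $K$. For each $i\in\{1,\dots,n\}$, pick a proper weak face $L_i$ with $K_i\subset L_i$ (such $L_i$ exists by hypothesis). I set $L:=\bigcup_{i=1}^n L_i$. Then clearly $K=\bigcup_{i=1}^n K_i\subset L$, and $K$ is compact as a finite union of compact sets, so the only substantive point is that $L$ is a proper weak face.

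By the preceding lemma (finite or arbitrary unions of weak faces are weak faces), $L$ is a weak face. To obtain properness, I would reduce to Lemma \ref{lemproper}: extend the finite family $\{L_i\}_{i=1}^n$ to a countable family by setting $L_i:=L_n$ for every $i>n$. Each $L_i$ in the extended family is still a proper weak face, and the extended union coincides with $L$. Lemma \ref{lemproper} then gives $L\ne\cm(X,f)$, so $L$ is proper.

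The main obstacle, if any, is purely bookkeeping: confirming that the extension trick above is legitimate, or equivalently, re-running the construction in the proof of Lemma \ref{lemproper} with a finite family. Since the argument there chooses $\mu_i\in\cm(X,f)\setminus L_i$ and forms convex combinations, it adapts verbatim to a finite family (one can take $\mu_i:=\mu_n$ for $i>n$), so no new idea is required. This completes the plan: $L$ is a proper weak face containing the compact set $K$.
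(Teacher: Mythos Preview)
Your proposal is correct and follows exactly the line the paper intends: the corollary is stated without proof precisely because it follows immediately from the preceding two lemmas by taking $L=\bigcup_{i=1}^n L_i$, noting that $L$ is a weak face (union lemma) and proper (Lemma~\ref{lemproper}, via the trivial extension to a countable family that you describe). No additional idea is needed.
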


\begin{definition}\label{kbeh}
Let $x\in X$ and  $K$ be a subset of $\cm(X,f)$. 
We say that
$x$ has \emph{weak $K$-behavior} if $\Omega(x)\cap K\ne\emptyset$.
We denote by by $\cH(K)$
the set consisting of all points with weak $K$-behavior.
\end{definition}


Following \cite{CTV}, 
we say that
$x$ has \emph{$K$-behavior} if $\Omega(x)\subset K$,
and $x$ is a point \emph{without $K$-behavior} if $\Omega(x)\subset 
\cm(X,f)\backslash K$
(we are aware that this notion is a bit misleading).
By definition, 
$x$ is a point without 
$K$-behavior
 if and only if $x\notin\cH(K)$. 
In particular, we have $\cN(f,\cH(K))\subset \cH(K)^c$,
i.e. every $x\in\cN(f,\cH(K))$ is a point without $K$-behavior.





\begin{definition}\label{defmc}
A subset $C$ of $X$ is called the \emph{measure center} of the system
$(X,f)$ if $C$ is the smallest closed subset such that
$\mu(C)=1$ for any $\mu\in\cm(X,f)$.
\end{definition}

\begin{lemma}
For any system $(X,f)$ and any compact $f$-invariant subset $Y$, 
the following are equivalent:
\begin{enumerate}
\item $\cm(Y,f)\ne\cm(X,f)$.
\item $Y$ does not include the measure center of $(X,f)$.
\item There is $\mu\in\cm(X,f)$ such that $\mu(Y)<1$.
\end{enumerate}
\end{lemma}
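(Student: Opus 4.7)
My plan is to prove the equivalences by establishing the cycle $(1)\Leftrightarrow(3)\Leftrightarrow(2)$, with condition (3) serving as a convenient pivot. The central observation underpinning both equivalences is that, because $Y$ is compact and $f$-invariant, $\cm(Y,f)$ can be identified with $\{\mu\in\cm(X,f):\mu(Y)=1\}$: any $\nu\in\cm(Y,f)$ extends by zero to an invariant measure on $X$, and any $\mu\in\cm(X,f)$ with $\mu(Y)=1$ restricts to $Y$ as a probability measure which is $f$-invariant because $(f|_Y)^{-1}(A)=f^{-1}(A)\cap Y$ and $\mu(Y^c)=0$. With this identification, $(1)\Leftrightarrow(3)$ becomes tautological.

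For $(2)\Leftrightarrow(3)$ I plan to first record the convenient characterization
$$C=\overline{\bigcup_{\mu\in\cm(X,f)}\supp(\mu)},$$
which follows directly from the minimality built into Definition \ref{defmc}: the right-hand side is closed and gives full measure to every invariant $\mu$, so $C$ is contained in it; conversely, $\supp(\mu)\subset C$ for every $\mu$ because $C$ is closed with $\mu(C)=1$. Given this, $(3)\Rightarrow(2)$ follows from the observation that $\mu(Y)<1$ with $Y$ closed forces $\supp(\mu)\not\subset Y$, hence $C\not\subset Y$. For $(2)\Rightarrow(3)$, I would pick $x\in C\setminus Y$; the open neighborhood $X\setminus Y$ of $x$ must meet some $\supp(\mu)$ by the characterization of $C$, and any witness point in $\supp(\mu)\cap(X\setminus Y)$ yields $\mu(X\setminus Y)>0$, i.e.\ $\mu(Y)<1$.

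I do not anticipate any serious obstacle: the lemma is a purely topological-measure-theoretic statement that uses neither the APP hypothesis nor anything beyond compactness of $X$ and $Y$ together with $f$-invariance of $Y$. The only points that deserve a line of care are the identification $\cm(Y,f)\cong\{\mu\in\cm(X,f):\mu(Y)=1\}$ and the derivation of the characterization of $C$ from its defining minimality property; both are routine.
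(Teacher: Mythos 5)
Your proof is correct. Note that the paper itself states this lemma without proof (it is treated as a standard fact), so there is nothing to diverge from: your two ingredients --- the identification $\cm(Y,f)\cong\{\mu\in\cm(X,f):\mu(Y)=1\}$ via extension by zero and restriction, and the characterization of the measure center as $\overline{\bigcup_{\mu\in\cm(X,f)}\supp(\mu)}$ --- are exactly the routine facts the author is implicitly relying on, and each step of your cycle $(1)\Leftrightarrow(3)\Leftrightarrow(2)$ checks out.
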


\begin{lemma}\label{lemcomp}
If $Y$ is a compact $f$-invariant subset
such that $\cm(Y,f)\ne\cm(X,f)$, then $Y\subset\cH(\cm(Y,f))$
and $\cm(Y,f)$ is a compact proper weak face.
\end{lemma}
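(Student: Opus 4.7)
The plan is to verify the two assertions separately. First I would show $Y\subset\cH(\cm(Y,f))$ by observing that for any $x\in Y$, compactness of $X$ guarantees $\Omega(x)$ is nonempty, and $f$-invariance together with closedness of $Y$ force $\Omega(x)\subset\cm(Y,f)$. Indeed, since $f^k(x)\in Y$ for all $k\ge 0$, every empirical measure $\cE(x,n)$ is supported on $Y$; passing to any weak-$*$ limit $\mu\in\Omega(x)$ preserves this (as $Y$ is closed we have $\mu(Y)\ge\limsup\cE(x,n)(Y)=1$), and $\mu$ is $f$-invariant by the standard argument. Hence $\Omega(x)\cap\cm(Y,f)\supset\Omega(x)\ne\emptyset$, so $x$ has weak $\cm(Y,f)$-behavior.

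Next I would verify that $\cm(Y,f)$ is a compact proper weak face. Compactness and convexity of $\cm(Y,f)$ are standard facts from \cite[Theorem 6.10]{Walters}. Properness is exactly the hypothesis $\cm(Y,f)\ne\cm(X,f)$. The only nontrivial point is the weak face condition: assume $\mu\in\cm(Y,f)$ admits a decomposition $\mu=\lambda\nu_1+(1-\lambda)\nu_2$ with $\lambda\in(0,1)$ and $\nu_1,\nu_2\in\cm(X,f)$. Since $\mu(X\setminus Y)=0$ and
\begin{equation*}
0=\mu(X\setminus Y)=\lambda\nu_1(X\setminus Y)+(1-\lambda)\nu_2(X\setminus Y),
\end{equation*}
with all terms nonnegative and $\lambda,1-\lambda>0$, we deduce $\nu_i(X\setminus Y)=0$, i.e. $\nu_i(Y)=1$ for $i=1,2$. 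Combined with $f$-invariance of each $\nu_i$, this gives $\nu_1,\nu_2\in\cm(Y,f)$, which is what the weak face condition demands.

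There is essentially no serious obstacle: the first assertion is a direct unwinding of definitions using that $Y$ is closed and $f$-invariant, and the second reduces to the elementary measure-theoretic fact that a null set for a convex combination of probabilities (with strictly positive weights) must be null for each summand. I would state these observations in this order, and conclude by noting that the corollary in Theorem \ref{thmain}(3) $\Rightarrow$ (4) then follows, since $\cN(f,Y)\supset\cN(f,\cH(\cm(Y,f)))$ whenever $Y\subset\cH(\cm(Y,f))$.
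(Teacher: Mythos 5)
Your proposal is correct and follows essentially the same route as the paper: the paper likewise notes that $\Omega(x)\subset\cm(Y,f)$ for $x\in Y$ (giving $Y\subset\cH(\cm(Y,f))$), asserts that $\cm(Y,f)$ is a compact weak face of $\cm(X,f)$, and obtains properness from $\cm(Y,f)\ne\cm(X,f)$. You simply fill in the routine details (nonemptiness of $\Omega(x)$, the null-set argument for the weak face condition) that the paper leaves implicit.
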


\begin{proof}
As $Y$ is compact and $f$-invariant, $(Y,f)$ is a subsystem.
Hence $\cm(Y,f)$ is a compact subset of $\cm(X,f)$ and it is a weak face.
For every $x\in Y$ we have $\Omega(x)\subset\cm(Y,f)$. This implies that
$Y\subset\cH(\cm(Y,f))$. As $Y$ does not include the measure center of $(X,f)$,
we have $\cm(Y,f)\ne\cm(X,f)$. So $\cm(Y,f)$ is a compact proper weak face.
\end{proof}

We say that $x$ is a \emph{generic} point for $\mu\in\cm(X,f)$ if $\Omega(x)=\{\mu\}$.
Note that $\mu$ is not necessarily an ergodic measure to have generic points.
The singleton $\{\mu\}$ may not be included in a proper weak face.
In Case (2) of Theorem \ref{thmain},
$Z_i$ consists of all points whose empirical measures accumulate to $\mu_i$
if and only if $Z_i=\cH(\{\mu_i\})$. But this is not covered by Case (4)
in the theorem. In the following proposition we consider the two cases
separately.


\begin{proposition}
\label{propunion}
Suppose that $(X,f)$ is not uniquely ergodic.
Let 
$U:=\bigcup_{i=1}^m U_i$ 
such that for each $i$ we have $U_i=\cH(\{\mu_i\})$ 
for an invariant measure 
$\mu_i\in\cm(X,f)$.
Let $V:=\bigcup_{j=1}^n \cH(K_j)$
such that each
$K_j$
is a compact subset of a proper weak face $L_j$.
Let $Z:=U\cup V$. 
Then  $\pww=P(f,\phi)$.
\end{proposition}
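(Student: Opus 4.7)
The plan is to show $\pww\ge P(f,\phi)$, since the reverse inequality is automatic from the Variational Principle. I will construct, for every $\ep>0$, an invariant measure $\mu\in\cm(X,f)$ satisfying both $P_\mu(f,\phi)>P(f,\phi)-\ep$ and $\{\mu\}\notin\overline{\Omega(Z)}$; this immediately gives $\pww\ge P(f,\phi)-\ep$ and hence the desired equality as $\ep\to 0$.

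First I observe that $L:=\bigcup_{j=1}^n L_j$ is itself a proper weak face: by the earlier lemma a finite union of weak faces is again a weak face, and by Lemma \ref{lemproper} a finite union of proper weak faces remains proper. Hence there exists $\nu\in\cm(X,f)\setminus L$. Using the Variational Principle I pick $\mu^*\in\cm(X,f)$ with $P_{\mu^*}(f,\phi)>P(f,\phi)-\ep/2$ and form the convex combinations
$$\mu_t:=(1-t)\mu^*+t\nu\in\cm(X,f),\qquad t\in(0,1).$$
Since $\nu\notin L_j$, the contrapositive of the weak face property forces $\mu_t\notin L_j$, and in particular $\mu_t\notin K_j$, for every $j=1,\cdots,n$. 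Because $P_\mu(f,\phi)$ is affine in $\mu$,
$$P_{\mu_t}(f,\phi)=(1-t)P_{\mu^*}(f,\phi)+tP_\nu(f,\phi)\longrightarrow P_{\mu^*}(f,\phi)\text{ as }t\to 0.$$
Discarding the at most $m$ values of $t$ for which $\mu_t=\mu_i$ for some $i$, I fix a small $t\in(0,1)$ with $P_{\mu_t}(f,\phi)>P(f,\phi)-\ep$ and $\mu_t\ne\mu_i$ for every $i$.

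It remains to verify the Hausdorff-metric separation. Compactness of each $K_j$ together with $\mu_t\notin K_j$ yields $D(\mu_t,K_j)>0$, so
$$\eta:=\min\left(\min_{1\le j\le n}D(\mu_t,K_j),\;\min_{1\le i\le m}D(\mu_t,\mu_i)\right)>0.$$
For any $x\in U_i=\cH(\{\mu_i\})$ we have $\mu_i\in\Omega(x)$, so $\dhh(\{\mu_t\},\Omega(x))\ge D(\mu_t,\mu_i)\ge\eta$; for any $x\in\cH(K_j)$ we pick $\nu'\in\Omega(x)\cap K_j$ and obtain $\dhh(\{\mu_t\},\Omega(x))\ge D(\mu_t,\nu')\ge D(\mu_t,K_j)\ge\eta$. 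Since this lower bound is uniform across $Z$, no sequence in $\Omega(Z)$ can approach $\{\mu_t\}$ in the Hausdorff metric, so $\{\mu_t\}\notin\overline{\Omega(Z)}$ and $\pww\ge P_{\mu_t}(f,\phi)>P(f,\phi)-\ep$.

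The main obstacle is arranging that a single perturbation direction $\nu$ lifts $\mu_t$ out of \emph{every} $L_j$ simultaneously. This is precisely what the preliminary step about finite unions of proper weak faces buys us, making the escape measure $\nu\notin\bigcup_j L_j$ available; once $\nu$ is in hand, the pressure-closeness follows from affineness and the separation from a routine Hausdorff-metric estimate.
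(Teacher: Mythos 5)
Your proof is correct and takes essentially the same route as the paper: perturb a high-pressure measure toward a measure outside $L=\bigcup_{j=1}^n L_j$ (proper by Lemma \ref{lemproper}), use the weak-face property to keep the convex combination outside every $L_j$ and hence off each compact $K_j$, avoid the finitely many $\mu_i$ by adjusting the mixing parameter, and finish with the same Hausdorff-distance separation showing $\{\mu_t\}\notin\overline{\Omega(Z)}$. One small point: you should also require $\nu\ne\mu^*$ (the paper insists $\mu_0\ne\mu$), since otherwise $t\mapsto\mu_t$ is constant and the ``discard at most $m$ values of $t$'' step fails in the degenerate case $\mu^*=\nu=\mu_i$; such a $\nu$ always exists, because if $\mu^*$ were the only measure outside $L$, then for any other invariant measure $\nu_0$ (non-unique ergodicity) the midpoint $\frac12\mu^*+\frac12\nu_0$ would lie in some $L_j$ and the weak-face property would force $\mu^*\in L_j$, a contradiction.
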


\begin{proof}

Take any $\al<P(f,\phi)$. 
There is an invariant measure
$\mu$ such that $P_\mu(f,\phi)>\al$. 
Let $L:=\bigcup_{j=1}^nL_j$. 
As $(X,f)$ is not uniquely ergodic, by Lemma \ref{lemproper},
we can find $\mu_0\in\cm(X,f)\backslash L$ such that $\mu_0\ne\mu$. 
Then there is $\theta\in(0,1)$ such that
for $\mu':=\theta\mu+(1-\theta)\mu_0$ we have 
$$P_{\mu'}(f,\phi)>\al\text{ and }\mu'\notin\{\mu_i:i=1,\cdots,m\}.$$ 
For each $j$, as $L_j$ is a weak face and either $\mu\notin L_j$ or $\mu_0\notin L_j$ holds,  we must have 
$\mu'\notin L_j$. Hence $\mu'\notin L$.

Let
$$K:=\{\mu_i: i=1,\cdots, m\}\cup\left(\bigcup_{j=1}^n K_j\right).$$
Then $K$ is compact and $\mu'\notin K$.
There is $\eta>0$ such that
$D(\mu',\nu)>\eta$ for every $\nu\in K$.

For every $x\in U$, we have $\mu_i\in\Omega(x)$ for some $i$. Then
$$\dhh\left(\{\mu'\},\Omega(x)\right)\ge D(\mu',\mu_i)>\eta.$$
For every $x\in V$, we have $\Omega(x)\cap K_j\ne\emptyset$ for some $j$.
Then
$$\dhh(\{\mu'\},\Omega(x))\ge\min\{D(\mu',\nu):\nu\in K_j\}>\eta.$$
So 
$$\dhh(\{\mu'\},\Omega(x))>\eta\text{ for every }x\in Z.$$
This implies that $\{\mu'\}\notin\overline{\Omega(Z)}$.
Then $$\pww\ge P_{\mu'}(f,\phi)>\al.$$
As $\al$ is arbitrary,
we have $\pww=P(f,\phi)$.
\end{proof}

By \cite{Sununierg}, APP systems with positive topological entropy are not uniquely ergodic.
So Proposition \ref{propunion} holds for such systems and verifies Theorem
\ref{thmain} based on Theorem \ref{propmain}.


The following example 
provides a motivation of our consideration of compact subsets of 
weak faces.
It also indicates that our approach may have
more applications.

\begin{example}
Let $M$ be a compact Riemannian manifold, $f:M\to M$ be a
$C^1$ diffeomorphism with a dominated splitting $TM=E\oplus F$
and $(M,f)$ is an APP system.
Assume that the Lyapunov exponents are non-positive along $E$ 
and non-negative along $F$.
Then by \cite{CCE} and \cite{CTV},
the set $PL(f)$ consisting of all physical-like measures
is a compact subset of a weak face
(the set consisting of all invariant measures satisfying Pesin entropy formula).
In this setting we are able to generalize the results in \cite{CTV}.
A simple example of such a system that is not covered by \cite{CTV} is
 the product of an irrational rotation and a quasi-hyperbolic toral automorphism.
Note that when $Z=\cH\left(PL\left(f\right)\right)$ is the set of all points with weak $PL(f)$-behavior,
the points in $\nfw$ are not just without physical-like behavior but
also have the forward orbits that do not accumulate to any point in $Z$.
\end{example}

\section*{Acknowledgments}
This work is supported by National Natural Science Foundation of China (No. 11571387)
and CUFE Young Elite Teacher Project (No. QYP1902). The author would like to
thank Jinpeng An for fruitful discussions and would also like to thank Xueting Tian and Ercai Chen for helpful comments.



\end{document}